\pgfplotsset{major grid style={dashed}}
\pgfplotsset{compat=1.18}
\definecolor{hanblue}{rgb}{0.27, 0.42, 0.81}
\definecolor{mordantred19}{rgb}{0.68, 0.05, 0.0}
\definecolor{red}{rgb}{0.68, 0.05, 0.0}
\definecolor{green}{rgb}{0.0, 0.5, 0.0}
\newcommand{\R}{\mathbb{R}}
\newcommand\restr[2]{{
  \left.\kern-\nulldelimiterspace
  #1 
  \vphantom{\big|} 
  \right|_{#2}
  }}
\theoremstyle{plain}
\newtheorem{thm}{Theorem}
\numberwithin{thm}{section}
\newtheorem{prop}[thm]{Proposition}
\theoremstyle{definition}
\newtheorem{defi}[thm]{Definition}
\newtheorem{rem}[thm]{Remark}
\renewcommand{\epsilon}{\varepsilon}
\renewcommand{\phi}{\varphi}
\renewcommand{\leq}{\leqslant}
\renewcommand{\geq}{\geqslant}
\definecolor{hpink}{RGB}{255, 20, 147}
\title{Perturbation-Aware Distributionally Robust Optimization for Inverse Problems}
\author{Floor van Maarschalkerwaart\thanks{Department of Applied Mathematics, University of Twente, Enschede, The Netherlands \\
(\texttt{f.vanmaarschalkerwaart@utwente.nl},  \texttt{c.brune@utwente.nl}, \texttt{m.c.carioni@utwente.nl})} 
\ \ Subhadip Mukherjee\thanks{Indian Institute of Technology (IIT), Kharagpur, India (\texttt{smukherjee@ece.iitkgp.ac.in})}
\ \ Malena Sabaté Landman\thanks{Mathematical Institute, University of Oxford, UK (\texttt{malena.sabatelandman@maths.ox.ac.uk})}\\ 
Christoph Brune\footnotemark[1], 
\ \ 
Marcello Carioni\footnotemark[1]}
\begin{document}

\maketitle

\begin{abstract}
This paper builds on classical distributionally robust optimization techniques to construct a comprehensive framework that can be used for solving inverse problems. Given an estimated distribution of inputs in $X$ and outputs in $Y$, an ambiguity set is constructed by collecting all the perturbations that belong to a prescribed set $K$ and are inside an entropy-regularized Wasserstein ball. 
By finding the worst-case reconstruction within $K$ one can produce reconstructions that are robust with respect to various types of perturbations: $X$-robustness, $Y|X$-robustness and, more general, targeted robustness depending on noise type, imperfect forward operators and noise anisotropies. 

After defining the general robust optimization problem, we derive its (weak) dual formulation and we use it to design an efficient algorithm. Finally, we demonstrate the effectiveness of our general framework to solve matrix inversion and deconvolution problems defining $K$ as the set of multivariate Gaussian perturbations in $Y|X$.  
% While we believe strong duality should hold under suitable assumptions, we do not prove it in this paper. 

\medskip

\textbf{Keywords.} Distributionally Robust Optimization, Inverse Problems, Duality.
\end{abstract}

\section{Introduction}

Optimization under uncertainty has always played a central role in decision-making processes across various fields such as operation research, engineering, economics, and machine learning. 
Classical methods assume that the probability distribution governing uncertain parameters is known. However, due to limited samples and imperfect measurements, it is very common in practice to require the uncertainties to be considered for the final decision. \textit{Distributionally robust optimization} (DRO) addresses this challenge by seeking solutions that are robust against distributional ambiguity. Instead of optimizing over a single distribution, DRO considers a family (or ambiguity set) of distributions and identifies decisions that perform well in the worst-case scenario within this set.

One prominent formulation of DRO uses the Wasserstein distance as a measure of proximity between probability distributions to define the ambiguity set. Indeed, the Wasserstein distance, rooted in optimal transport theory, is particularly attractive for DRO, due to its intuitive geometric interpretation and its ability to capture discrepancies in both support and mass between distributions. Moreover, it allows for smoothing strategies (e.g., entropic regularization) that make the computation of the Wasserstein distance more efficient and scalable. 

DRO frameworks are explicitly designed for optimizing in the presence of uncertainties. For this reason, it is natural to draw a connection to inverse problems, where the measurement process $H: X \rightarrow Y$ is often imperfect and the observation $y = Hx$ is corrupted by different types of noise. Despite this analogy, the relation between DRO approaches, inverse problems reconstruction, and regularization is currently substantially unexplored. In this paper, we aim to make a first step to bridge this gap by designing a general DRO framework that is flexible to be used in inverse problems. In particular, given an in-sample estimation of the data acquisition process $\mu^* \in P(X\times Y)$ (often given as an empirical estimation) we define a \emph{perturbation-aware} DRO framework, where the ambiguity set is constrained to all possible perturbations of $\mu^*$ that belong to a given class $K \subset P(X\times Y)$. This framework builds on Bayesian variants of DRO \cite{levy2012robust,zhang2022distributionally}. It allows - by making suitable choices of $K$ - to enforce the desired robustness in the reconstruction and is ultimately linked to specific choices for Tikhonov-type regularization. For example, by choosing $K \subset P(X)$, one could enforce robustness to perturbations of the input, while by choosing $K \subset \{\mu \in P(X \times Y): \mu_X^* \in P(X) \text{ is the } X\text{-marginal of } \mu\}$, one instead enforces robustness with respect to perturbations in the conditional distribution of $Y|X$, thus mimicking the uncertainty of the data acquisition process. Moreover, by further specifying $K$ one can recover robustness to prescribed distributions (e.g. Gaussian, Poisson) and impose desired anisotropies.

In practice, we define the \emph{perturbation-aware} DRO as a min-max problem where the ambiguity set includes all perturbations of $\mu^*$ in $K$ that additionally belong to the entropy-regularized Wasserstein ball. To implement an efficient numerical method to compute reconstructions of inverse problems we derive a (weak) dual formulation for the \emph{perturbation-aware} DRO. Although this provides only an upper bound for the problem, we believe that under suitable assumptions a strong duality result should hold. However, we leave this proof up to future research. This allows us to rely on a biased stochastic mirror descent approach \cite{nemirovskij1983problem}, where we follow \cite{wang2021sinkhorn} which was designed for entropy-regularized Wasserstein DRO with no perturbation constraints. 
To showcase the effectiveness of our framework for inverse problems, we present several examples. First, we set $K$ as the family of multivariate Gaussian perturbations in $Y|X$ with varying (scalar) variance. Subsequently, we consider the case where the Gaussian perturbations can have an arbitrary covariance matrix, allowing for anisotropic perturbations and robustness. Finally, we apply our method to solve image deconvolution problems with Gaussian and Poisson noise. \\

\textbf{Main contributions.} The main contributions of this paper can be summarized as follows:

\begin{enumerate}
    \item We introduce a \emph{perturbation-aware} DRO framework, where perturbations of an in-sample estimated distribution are constrained to be in a set $K \subset P(X\times Y)$.
    \item We show how this framework naturally applies to inverse problems and is amenable to different kinds of robustness, resulting in a flexible inverse that can be applied to measurements corrupted by any type of noise.
    \item We derive a weak duality formula for the problem, enabling the design of efficient numerical schemes. Under suitable assumptions, we believe strong duality will hold.
    \item We demonstrate the effectiveness of our method for matrix inversion tasks and deconvolution problems, by enforcing robustness in the reconstruction through different types of Gaussian perturbations (isotropic and anisotropic). 
    \end{enumerate}
%\sm{I think the paragraph on contributions should appear after related works and preliminaries. Without a formal definition of several notions, it is hard to follow the contributions as stated here.}
\subsection{Related works}

Distributional Robust Optimization approaches have been extensively studied in recent years \cite{blanchet2024distributionally}. While several works focused on Optimal Transport approaches \cite{mohajerin2018data,kuhn2019wasserstein,blanchet2019quantifying,blanchet2019robust}, a vast literature exists also for different metrics and divergences \cite{zhu2021kernel,hu2013kullback,blanchet2023unifying,wang2021sinkhorn}. Bayesian variants of DRO have been explored in several works \cite{levy2012robust,zhang2022distributionally}. Another approach would be to consider DRO with data augmentation \cite{sinha2020certifying}, but we aim to give a model-based perspective with more flexibility and theoretical guarantees. Finally, we mention that DRO frameworks have proven useful in data-driven contexts \cite{kuhn2019wasserstein,sagawa2019distributionally} and to defend against adversarial attacks \cite{bui2022unified,levine2020wasserstein}. Other relevant applications of DRO include Kalman filtering \cite{shafieezadeh2018wasserstein} and fairness \cite{wang2024wasserstein}.

\section{Preliminaries}

\subsection{Distributional Robust Optimization (DRO)}
Distributional Robust Optimization (DRO) frameworks 
%address this challenge by defining 
propose an adversarial approach that, given an estimated in-sample distribution $\mu^*$, can find the best parameter estimation in $\Theta$ that is robust to out-of-sample distributions that belong to a given set of perturbations.
This is achieved by considering a min-max optimization problem, where the optimal parameter is found by solving 
\begin{align}\label{eq:DRO}
    \min_{g \in \Theta} \max_{\mu \in B_\varepsilon(\mu^*)} \int_S \ell(s,g) d\mu(s),
\end{align}
with a measurable loss $\ell : S \times \Theta \rightarrow [0,\infty)$ and where $B_\varepsilon(\mu^*)$ is the set of perturbations around the given estimated distribution $\mu^* \in P(S)$. Note that this is typically defined as the $\varepsilon$-ball given a suitable discrepancy $D: P(S) \times P(S) \rightarrow \R$ between probability measures.

\subsection{Wasserstein distances and entropy regularization}

%In this work, we limit ourselves to Optimal Transport-type discrepancies. However, we believe that it would be of high interest to extend our study to more general classes of discrepancies, such as $\phi$-divergences (\cite{}) and integral probability metrics (IPM) \cite{}. \sm{Missing references}

Given a distance $c: S\times S \rightarrow [0,\infty]$, the Wasserstein-1 distance $W_1 : P(S) \times P(S) \rightarrow [0,\infty)$ between two probability measures $\mu$ and $\nu$ is defined as
\begin{align}
W_1(\mu,\nu) =   \inf_{\pi \in \Pi(\mu, \nu)}  \int_{S \times S} c(s,r) d\pi(s,r),
\end{align}
where $\Pi(\mu, \nu)$ is the set of admissible transport plans defined as the probability measures in $S\times S$ with marginals $\mu$ and $\nu$.
Moreover, %In the following, 
we consider the entropy-regularized Wasserstein-1 distance \cite{peyre2019computational}%as our discrepancy
,  defined as
\begin{align}\label{eq:entropy}
    W_1^\delta(\mu, \nu) &:= \inf_{\pi \in \Pi(\mu, \nu)}  \int_{S \times S} c(s,  r) + \delta \log\left( \frac{d\pi}{d\eta}\right) d\pi(s, r)
\end{align}
where $\delta > 0$ is a regularization parameter and $\eta \in P(S\times S)$ is a reference measure. Note that the previous minimization problem is performed on the transport plans $\pi \in \Pi(\mu, \nu)$ that are absolutely continuous with respect to $\eta$. 
It is well-known that the optimization problem \eqref{eq:entropy} is strictly convex in $\pi \in \Pi(\mu,\nu)$ due to the addition of the entropy term. This important property allows for explicit optimality conditions \cite{peyre2019computational,wang2021sinkhorn}, which enable the introduction of fast algorithms \cite{cuturi2013sinkhorn}.
For this reason, %These motivations justify our choice to 
we use $W_1^\delta(\mu, \nu)$ as the discrepancy in our framework.

\subsection{Inverse problems}

The main goal of this paper is to apply the perturbation-aware DRO problem to inverse problems.
 %For this reason, we will consider perturbations in $S= X \times Y$, where $X$ is the parameter space and $Y$ is the measurement space, and we suppose to
For simplicity, assume we have a linear operator $H: X \rightarrow Y$, representing the data acquisition process, whose output is corrupted by noise as
\begin{align}\label{eq:inverse}
y = Hx + {\rm noise}.   
\end{align}
The goal of inverse problems is to reconstruct the true parameter $x\in X$ from the noisy measurements $y \in Y$ produced as in \eqref{eq:inverse}. Crucially, this reconstruction process is often ill-posed. This is due to $H$ possibly being ill-conditioned or not injective, and the sensitivity of the reconstruction process with respect to the presence of noise \cite{engl1996regularization}. To solve these issues, many approaches have been proposed, both based on classical optimization tools and data-driven techniques \cite{carioni2023unsupervised2,benning2018modern}. Notably, one can aim at approximating a \emph{good} reconstruction of $x$ by solving a variational problem of the form
\begin{align}
    \inf_{x\in X} \frac{1}{2}\|Hx - y\|_2^2 + R(x) 
\end{align}
where $R: X \rightarrow [0,\infty]$ is a suitable regularizer, responsible for encoding a priori information in the reconstruction. The most well-known example of such regularization is Tikhonov-regularization, where $R(x)=\|x\|_2^2$. Other notable approaches are based on a Bayesian interpretation of the problem \eqref{eq:inverse} see, e.g. \cite{dashti2013bayesian}, where the measurement process \eqref{eq:inverse} induces a joint distribution $P(X\times Y)$ of parameters and (noisy) measurements and the goal is to construct the full posterior distribution of $x$ given $y$.

In this work, we propose an alternative framework based on robustness against perturbations in $S= X \times Y$.

\section{A perturbation-aware framework for DRO}

This section is devoted to defining our novel framework for perturbation-aware distributional robustness.
First, we prescribe the set of admissible perturbations by fixing an a-priori family of perturbations $K \subset P(S)$. The set $K$ includes all perturbations of the in-sample estimated distribution $\mu^* \in P(S)$ that could be employed by an adversary in the worst-case optimization problem in \eqref{eq:DRO}. 
More precisely, we consider the set of admissible perturbations of the measure $\mu^*$ that are constructed as the second marginal of any measure $\nu \in P(S\times S)$ of the form 
\begin{align}
    \nu = \mu^* \otimes \pi_s, 
\end{align}
where $\pi_s : S \rightarrow P(S)$ is any conditional distribution such that $\pi_s \in K$ for $\mu^*$-a.e. $s\in S$. We denote such a space by
\begin{align*}
    B_{K} = \{\mu\in \mathcal{M}_+(S),\,  2^{nd}\text{ marginal of } \mu^* \otimes \pi_s: \pi_{s}\in K \text{ for } \mu^*\text{-a.e. } s \in S\}.
\end{align*}
We then consider only admissible perturbations that are inside an $\varepsilon$-ball centered in $\mu^*$ according to a given discrepancy $D : P(S) \times P(S) \rightarrow [0,\infty]$.
This leads us to define the following sets of admissible perturbations
\begin{align*}
    B_{D, \varepsilon, K}(\mu^*) &:= B_K \cap \{ \mu \in P(S) : D(\mu^*,\mu) \leq \varepsilon\}.
\end{align*}

In this paper, we aim to study distributionally robust optimization (DRO) settings for perturbations defined as above. Therefore, given a parameter space $\Theta$ and a measurable loss $\ell : S \times \Theta \rightarrow [0,\infty)$ we define the perturbation-aware DRO problem as follows.
\begin{defi}\label{def:constrdro}
    The perturbation-aware DRO problem is defined as 
\begin{align}\label{eq:pdro}
\inf_{g \in \Theta}\sup_{\mu \in  B_{D, \varepsilon, K}(\mu^*)} \int_S \ell(s,g) d\mu(s).
\end{align}
Moreover, we choose as $D$ the entropy-regularized Wasserstein-1 distance defined in \eqref{eq:entropy}. This leads to the perturbation-aware $W^\delta_1$-DRO problem 
\begin{align}\label{eq:entrDRO}
\inf_{g \in \Theta}\sup_{\mu \in  B_{W_1^\delta, \varepsilon, K}(\mu^*)} \int_S \ell(s,g) d\mu(s).
\end{align}
\end{defi}

We remark that the choice of $W^\delta_1$ as discrepancy enables the use of efficient algorithms for DRO optimization that rely on the specific properties of the entropic regularization \cite{wang2021sinkhorn}. 

\subsection{Perturbation-aware DRO meets inverse problems}\label{sec:pert}
When dealing with inverse problems, that is, when $S = X \times Y$, the choice of admissible perturbations $K \subset P(X\times Y)$ should reflect the type of the robustness that we want to enforce in the solution; so this is naturally a problem-dependent decision. % of the given inverse problem. 
In particular, we adopt a Bayesian interpretation of problem \eqref{eq:inverse} and assume that this induces a joint distribution $P(X\times Y)$ over the joint parameter/measurement space. %input/output. 
Therefore, when searching for a posterior, we can look for the worst-case scenarios for different, suitably chosen sets $K$. 
In particular, we pinpoint four different classes of perturbations that are worth considering:
\begin{enumerate}
    \item \textbf{$X$-perturbations.} This case models perturbations of the input distribution (e.g. equivariant transformations, frequency modifications or shifts). The set $K$ is chosen so that $K \subset P(X)$.
    
    \item \textbf{$Y$-perturbations.} This case models perturbations of the full measurements distribution, or, equivalently, transformations of the output distribution. %. Similarly to 1., transformations of the output distribution fall in this class. 
    The set $K$ is chosen so that $K \subset P(Y)$. 
    
    \item \textbf{$Y|X$-perturbations.}
    This case models perturbations in the conditional process $Y|X$ and is therefore of crucial importance for inverse problems. One main example of such perturbations is given by the additive noise in the measurements, c.f. \eqref{eq:inverse}. This is particularly relevant since it mimics the data acquisition process and can be adapted to different types of noise that might be present in the measurements. %we experience. 
    Moreover, any other form of conditional modification of measurements, such as imperfect forward measurements \cite{bungert2020variational},  belongs to this class.  The set $K$ is chosen as $K \subset \{\mu \in P(X \times Y): \mu_X^* \in P(X) \text{ is the } X\text{-marginal of } \mu\}$. 
    
    \item \textbf{$X|Y$-perturbations.} This case is similar to the mentioned $Y|X$-perturbations, but now only perturbations in the inverse conditional process $X|Y$ are considered instead. The set $K$ is chosen as  $K \subset \{\mu \in P(X \times Y): \mu_Y^* \text{ is the } Y\text{-marginal of } \mu$\}.
\end{enumerate}

In this paper, we will focus on examples from  class 3, due to their relevance for inverse problems and their novelty compared to traditional DRO frameworks.
 \section{Upper bounds and weak duality results}

In this section we derive duality results for the worst-case term in Definition \ref{def:constrdro}:
\begin{align}
I := \sup_{\mu \in  B_{W_1^\delta, \varepsilon, K}(\mu^*)} \int_S l(s, g) d\mu(s),
\end{align}
where we suppress the dependence on $g$ in the definition of $I$.
Apart from its theoretical interest, such a result will be fundamental to design an implementable algorithm to solve \eqref{eq:entrDRO}.
We prove the following weak duality result, under the choice of the reference measure $\eta = \mu^* \otimes \eta_s$ for a given $\eta_s : S \rightarrow P(S)$.
\begin{prop}\label{prop:weak}
It holds that 
\begin{align}
    I \geq \inf_{\lambda \leq 0} \lambda \varepsilon + \int_S h_{s}(\lambda) d\mu^*(s)
\end{align}
where
\begin{equation}\label{eq:h}
    h_{s}(\lambda) := \sup_{\mu \in K} \int_S l (r,g) - \lambda c(s , r) - \lambda \delta \log \left(\frac{d\mu}{d\eta_s}\right)d\mu(r).
\end{equation}
\end{prop}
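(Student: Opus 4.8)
The plan is to bound $I$ from below by restricting the supremum to a rich but analytically tractable subfamily of admissible perturbations, and then to dualize the single scalar budget constraint that survives the restriction.

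\emph{Step 1 (reduction to product plans).} First I would discard the full $W_1^\delta$-ball in favour of a surrogate budget that is generated by one explicit transport plan. Given any field $s\mapsto \pi_s$ with $\pi_s\in K$ and $\pi_s\ll\eta_s$ for $\mu^*$-a.e.\ $s$, the measure $\nu=\mu^*\otimes\pi_s$ belongs to $\Pi(\mu^*,\mu)$, where $\mu$ is its second marginal and hence lies in $B_K$. Since the reference measure factorises as $\eta=\mu^*\otimes\eta_s$, disintegration gives $\frac{d\nu}{d\eta}(s,r)=\frac{d\pi_s}{d\eta_s}(r)$, so evaluating the entropic transport functional of \eqref{eq:entropy} at this particular (generally non-optimal) plan yields
\begin{align}
W_1^\delta(\mu^*,\mu)\ \leq\ \int_S\!\int_S\Big(c(s,r)+\delta\log\frac{d\pi_s}{d\eta_s}(r)\Big)\,d\pi_s(r)\,d\mu^*(s).
\end{align}
Consequently every field $\{\pi_s\}$ whose right-hand side is $\leq\varepsilon$ produces a $\mu\in B_{W_1^\delta,\varepsilon,K}(\mu^*)$, and since $\int_S \ell(r,g)\,d\mu(r)=\int_S\!\int_S \ell(r,g)\,d\pi_s(r)\,d\mu^*(s)$, I obtain $I\geq I'$, where $I'$ is the supremum of this last integral over all such fields subject to the scalar surrogate budget above.

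\emph{Step 2 (duality for the restricted problem).} The objective of $I'$ is linear and the surrogate budget is convex in the field $\{\pi_s\}$ (the relative-entropy term being convex), so $I'$ is a concave maximisation under a single scalar constraint. For such a problem strong duality holds under a constraint qualification (a field in $K$ whose surrogate budget is strictly below $\varepsilon$), and dualising the budget with a Lagrange multiplier $\lambda$, of the sign dictated by the constraint, gives
\begin{align}
I'=\inf_{\lambda}\Big[\lambda\varepsilon+\sup_{\{\pi_s\}}\int_S\!\int_S\Big(\ell(r,g)-\lambda c(s,r)-\lambda\delta\log\tfrac{d\pi_s}{d\eta_s}(r)\Big)\,d\pi_s(r)\,d\mu^*(s)\Big].
\end{align}
Because the inner objective decouples across the fibre variable $s$, a pointwise maximisation then lets me pull the supremum inside the $\mu^*$-integral, turning it into $\int_S h_s(\lambda)\,d\mu^*(s)$ with $h_s$ exactly as in \eqref{eq:h}. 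Chaining this identity with $I\geq I'$ from Step 1 yields the inequality of Proposition \ref{prop:weak}.

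The main obstacle is Step 2: rigorously justifying strong duality for this infinite-dimensional restricted problem and the interchange of the supremum with the $\mu^*$-integral. I expect to invoke a minimax theorem (e.g.\ Sion's), which requires convexity in $\{\pi_s\}$ and affinity in $\lambda$ together with the weak-$*$ compactness and lower-semicontinuity supplied by the entropic term, and to verify the constraint qualification; the measurable dependence $s\mapsto\pi_s$ of the fibrewise maximisers must also be arranged so that $\int_S h_s(\lambda)\,d\mu^*(s)$ is well defined. Step 1, by contrast, is elementary and is precisely where the estimate becomes one-sided: restricting to product plans and to the surrogate budget can only shrink the feasible set, so the bound is $I\geq I'$ rather than an identity, which is why the statement is a weak (lower) duality bound and not an exact reformulation.
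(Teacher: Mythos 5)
Your Step~1 is sound and is in fact more careful than the corresponding step in the paper: the paper simply asserts the identity $I=\sup_{\pi_s}\int_{S\times S}\ell(r,g)\,d\pi_s(r)\,d\mu^*(s)$ over fields $\pi_s\in K$ subject to the budget $\int_S\int_S \big(c(s,r)+\delta\log\tfrac{d\pi_s}{d\eta_s}\big)\,d\pi_s(r)\,d\mu^*(s)\leq\varepsilon$, i.e.\ it evaluates the entropic cost along the defining coupling $\mu^*\otimes\pi_s$, whereas you correctly observe that this coupling is generally suboptimal in \eqref{eq:entropy}, so under the literal definition of $B_{W_1^\delta,\varepsilon,K}(\mu^*)$ this restriction is one-sided. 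The genuine gap is in Step~2, and it is fatal to your plan. Whatever the intended sign convention for $\lambda$ (the ``$\geq$'' and ``$\lambda\leq 0$'' in the printed statement are direction/sign typos), the inequality the paper actually proves --- and the only one it uses, cf.\ the comment after \eqref{eq:upperbound} that the dual problem ``is an upper bound'' for \eqref{eq:entrDRO} --- is $I\leq\inf_{\lambda\geq 0}\big[\lambda\varepsilon+\int_S h_s(\lambda)\,d\mu^*(s)\big]$. Its proof is the elementary weak-duality chain: introduce the multiplier for the budget, use $\sup\inf\leq\inf\sup$, and then bound $\sup_{\{\pi_s\}}\int_S(\cdots)\,d\mu^*\leq\int_S\sup_{\mu\in K}(\cdots)\,d\mu^*$. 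Every interchange is used in its trivial direction, so no minimax theorem, no constraint qualification, and no measurable selection are required.

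Your argument needs the opposite, hard direction. Weak Lagrangian duality for your restricted problem gives only $I'\leq\inf_\lambda[\cdots]$, and combining this with Step~1's $I\geq I'$ yields no relation whatsoever between $I$ and the dual value; to conclude you must prove $I'=\inf_\lambda[\cdots]$, i.e.\ zero duality gap, plus the \emph{equality} $\sup_{\{\pi_s\}}\int_S=\int_S\sup_{\mu\in K}$, whose nontrivial direction requires the measurable-selection argument you flag but do not supply. The strong-duality step cannot be justified by Sion's theorem as you propose: Sion requires the maximizing variable to range over a convex set, but the fields are constrained by $\pi_s\in K$ and the paper imposes no convexity on $K$ --- indeed all of its examples, the isotropic and anisotropic Gaussian families $\{\mathcal{N}(Hx,\sigma)\}$ and $\{\mathcal{N}(Hx,\Sigma)\}$, are non-convex sets of measures, so a duality gap cannot be ruled out (the cited strong-duality result of Wang et al.\ covers only the unconstrained, convex case $K=P(S\times S)$). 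This is precisely the question the paper explicitly leaves open (``we believe that under suitable assumptions a strong duality result should hold\dots we leave this proof up to future research''). In short: the proposition is a one-sided weak-duality bound with a three-line proof in the easy direction, while your proposal attempts the converse direction and stands or falls with an unproved --- and, for non-convex $K$, possibly false --- strong-duality claim.
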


\begin{proof}
 Note that we can write $I$ as 
\begin{align*}
I = \sup_{\pi_s} \int_{S \times S} \ell(r,g) d\pi_s(r) d\mu^*(s)
\end{align*}
where $\pi_s \in K$ for $\mu^*-a.e$, $s\in S$ and  $\int_S \int_S c(s,r) + \delta \log\left(\frac{d\pi_s}{d\eta_s}\right) d\pi_s(r) d\mu^*(r) \leq \varepsilon$. Introducing the Lagrange multiplier $\lambda \geq 0$ for such constraint  we obtain  that 
\begin{align*}
I & \leq \sup_{\pi_s \in K} \inf_{\lambda \geq 0}\left[ \lambda \varepsilon + \int_{S \times S} \ell(s,g) - \lambda c(s,r) - \lambda \delta \log\left(\frac{d\pi_s}{d\eta_s}\right) d\pi_s(r) d\mu^*(s)  \right]\\
& \leq \inf_{\lambda \geq 0}\sup_{\pi_s \in K}\left[ \lambda \varepsilon + \int_{S \times S} \ell(s,g) - \lambda c(s,r) - \lambda \delta \log\left(\frac{d\pi_s}{d\eta_s}\right) d\pi_s(r) d\mu^*(s)  \right]\\
& = \inf_{\lambda \geq 0} \left[\lambda \varepsilon +  \sup_{\pi_s \in K}\int_{S} \left(\int_S \ell(r,g) - \lambda c(s,r) - \lambda \delta \log\left(\frac{d\pi_s}{d\eta_s}\right) d\pi_s(r) \right)d\mu^*(s)  \right]\\
& \leq \inf_{\lambda \geq 0} \left[\lambda \varepsilon +  \int_{S} \left( \sup_{\mu \in K} \int_S \ell(r,g) -\lambda  c(s,r) - \lambda \delta \log\left(\frac{d\mu}{d\eta_s}\right) d\mu(r) \right)d\mu^*(s)  \right]
\end{align*}
as we wanted to prove.
\end{proof}

\begin{rem}
    In \cite{wang2021sinkhorn}, it has been shown that if $K = P(S \times S)$, meaning no perturbation constraints are imposed, then strong duality holds and $h_s(\lambda)$ has an explicit expression given by 
    \begin{align}
        h_s(\lambda) = \lambda \delta \int_S e^{\frac{\ell(r, g) - \lambda c(s,r)}{\lambda \delta}} d\eta_s(r).
    \end{align}
%Thus in this case,
%\begin{align}
 %    I =\inf_{\lambda \leq 0} \lambda \varepsilon + \int_S h_{s}(\lambda) d\mu^*(s).
%\end{align}
%However, an explicit expression for any possible choice of $K$ is, in general, not available. 
\end{rem}
\section{Numerics}
\subsection{Algorithm description}
To compute the optimal $g \in \Theta$ solving \eqref{eq:entrDRO} we rely on the (weak) dual formula for the perturbation-aware DRO problem derived in Proposition \ref{prop:weak}. In particular, we numerically solve the following optimization problem
\begin{align}\label{eq:upperbound}
    \inf_{g \in \Theta}\inf_{\lambda \leq 0} \lambda \varepsilon + \int_S h_{s}(\lambda) d\mu^*(s)
\end{align}
where $h_{s}$ is defined as in \eqref{eq:h}. Thanks to Proposition \ref{prop:weak}, this is an upper bound for \eqref{eq:entrDRO}. Note that, even if we will not prove it in the current paper, we believe that under suitable assumptions on the set $K$ and the loss $\ell$ a strong duality result would hold. 

The algorithm used to compute solutions of \eqref{eq:upperbound} is adapted from \cite{wang2021sinkhorn}.
Crucially, we assume that the set $K$ can be parametrized by a vector $q \in Q$ and the objective is differentiable with respect to $q$. This will be true for all the numerical examples considered in this paper.
We perform a bisection search algorithm to determine the optimal $\lambda$, while the optimal $g$ and $q$ for a given $\lambda$ are found using an alternating Biased Stochastic Mirror Descent (BSMD) algorithm. 

The bisection search algorithm iteratively narrows its search interval by evaluating the objective function at the mid- and endpoints of the current interval (using the BSMD algorithm), adjusting the bounds based on which point results in a smaller function value. This process continues until the interval is sufficiently small, resulting in an optimal $\lambda$.

For each $\lambda$, the BSMD algorithm iteratively finds the optimal $g$ and $q$. Every iteration, $q$ is first updated by adding its gradient scaled by its learning rate in a maximization step. Afterwards, $g$ is updated by subtracting its gradient scaled by its learning rate in a minimization step. Both gradient updates are estimated by a Randomized Truncation MLMC (RT-MLMC) estimator, which provides a biased gradient estimate with controlled bias. For more details on the algorithm, we refer to \cite{wang2021sinkhorn}.

\subsection{Isotropic Gaussian perturbations for matrix inversion}\label{sec:gaussian_iso}

We consider as first example the case where $K$ is defined as
\begin{align*}
K = \{\mu_X^* \otimes \mathcal{N}(Hx,\sigma) : 0<  \sigma < M\}
\end{align*}
where $N(p,\sigma)$ is a Gaussian with average $p$ and covariance matrix $\sigma \cdot {\rm Id}$, $M$ is a fixed positive constant and $\mu^*_X$ is a given empirical distribution in $X$
\begin{align}
    \mu_X^* = \frac{1}{N}\sum_{i=1}^N \delta_{x_i}.
\end{align}
Note that this choice of perturbation belongs to class $3$ described in Section \ref{sec:pert} and enforces robustness against Gaussian noise.

In this experiment, we simply aim to perform matrix inversion. To this end, we consider $H$ to be a given matrix in $\R^{n \times m}$, 
$\Theta = \R^{n\times m}$ and $\ell((x,y),g) = \|g(y) - x\|^2_2$ for $g \in \R^{n\times m}$ and we aim at solving \eqref{eq:entrDRO} to find the inverse of $H$ from an initial in-sample estimation $\mu^* \in P(X\times Y)$ given by 
\begin{align}\label{eq:emp}
    \mu^* = \frac{1}{N}\sum_{i=1}^N \delta_{(x_i,y_i)}
\end{align}
where $y_i = Hx_i + {\rm noise}$.
Note that, by choosing as reference measure $\nu = \mu_X^*\otimes dy$ one can compute $h_{s} = h_{(\bar x, \bar y)}$ explicitly as
\begin{align}\label{eq:h}
  h_{(\bar x, \bar y)}(\lambda)  =   \sup_{\sigma \in (0,M]} \int_{X \times Y} & \|x - g(y)\|_2^2 - \lambda c((\bar x,\bar y), (x,y)) \\
  & \qquad - \lambda \delta \log \left[N(Hx, \sigma)\right]\, N(Hx, \sigma)(y) dy d\mu_X^*(x). \nonumber
\end{align}

We test the algorithm for the simple case of inverting the $2\times 2$ matrix given as $H = 2\cdot {\rm Id}$. We consider $(x_i)_{i=1}^{400}$ sampled uniformly in the square $[0,1]^2$ and we generate $y_i$ as $Hx_i$. The cost used inside $W_1^\delta$ is simply the Euclidean distance, while the parameters are set as $\delta = 0.1$ and $\varepsilon = 0.001$. The optimal $g_{opt}$ and $\sigma_{opt}$ obtained by the algorithm are
\begin{align}
    g_{opt} = \left[\begin{array}{cc}
     0.442 \ & \ 0.039 \\
 0.038 \ & \ 0.450  
    \end{array}
    \right]
\quad \sigma_{opt} = 0.280
\end{align}
and the convergence graph is reported in Figure \ref{fig:isotropic}.

\begin{figure*}[h!]
	\centering
\begin{subfigure}
{.49\textwidth}
\includegraphics[height=1.9in]{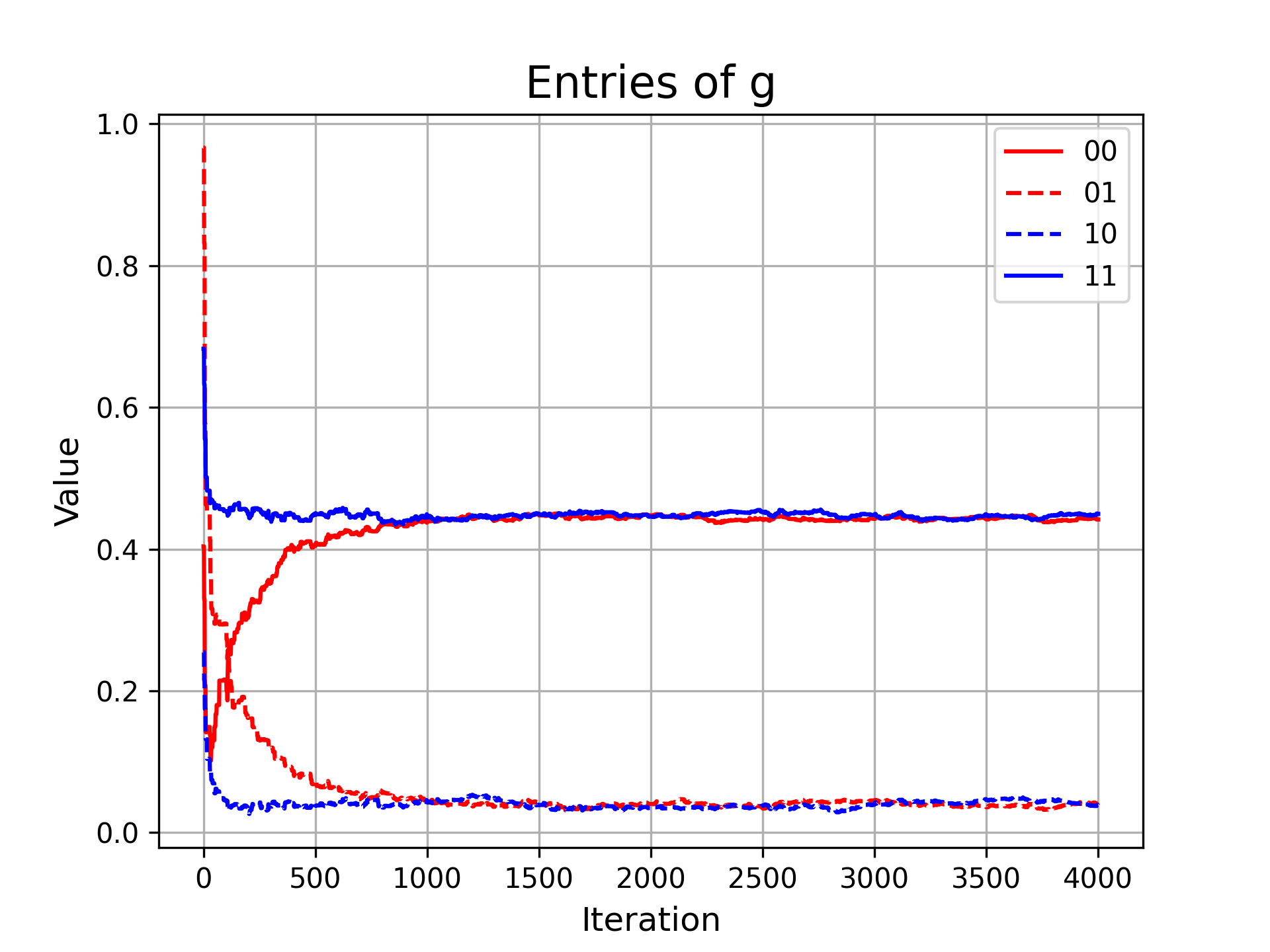}
\end{subfigure}
\begin{subfigure}{.49\textwidth}
\includegraphics[height=1.9in]{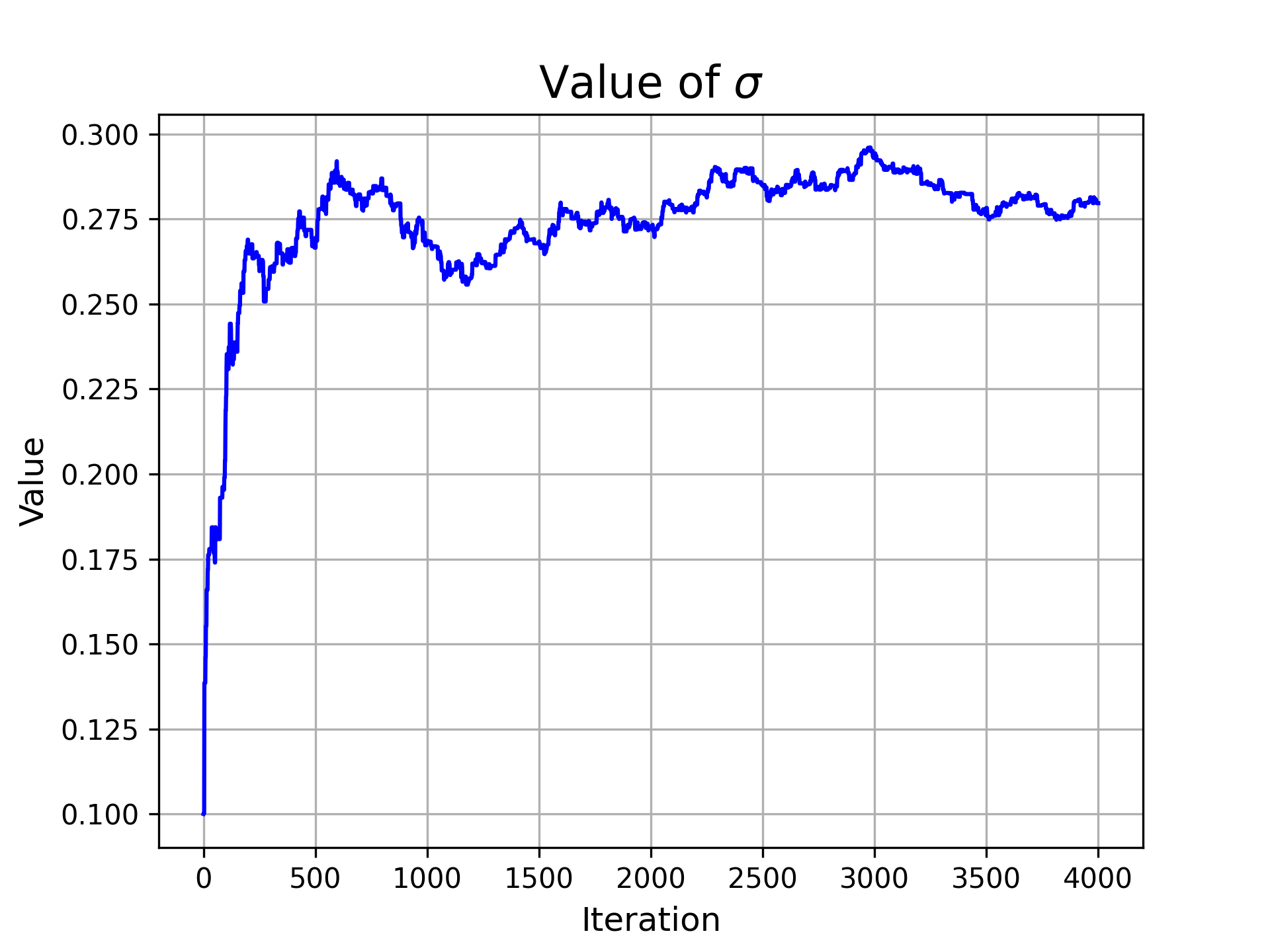}
	\end{subfigure}
	\caption{Left: entries of the reconstructed $g$ through the iterations. Right: values of the standard deviation $\sigma$ through the iterations.} \label{fig:isotropic}
 \end{figure*}
Note that the discrepancy between the reconstructed inverse and the true inverse has to be expected as a result of the regularization effect of the robust framework. This has been shown in \cite[Proposition 2]{blanchet2019robust} for linear regression and we expect a similar effect in our model, resulting in an attenuation of the spectral norm of $g_{opt}$.

\subsection{Anisotropic Gaussian perturbations for matrix inversion}
In this example we consider the case where $K$ is defined as
\begin{align*}
K = \{\mu^* \otimes \mathcal{N}(Hx,\Sigma) : 0<  \Sigma < M\}
\end{align*}
where $N(p,\Sigma)$ is a Gaussian with average $p$ and covariance matrix $\Sigma$, $M$ is a fixed positive constant and $\mu^*_X$ is a given empirical distribution in $X$. Note that also this case belongs to class $3$ described in Section \ref{sec:pert}. However, it enforces robustness with respect to noise that exhibits anisotropic behaviour. 
Similarly to Section \ref{sec:gaussian_iso}, given a matrix $H$ in $\R^{n\times m}$ we perform matrix inversion by solving \eqref{eq:entrDRO} for $\ell((x,y),g) = \|g(y) - x\|^2_2$ and $\mu^* \in P(X\times Y)$ given as in \eqref{eq:emp}.

Similarly to the isotropic case, by choosing as reference measure $\nu = \mu_X^*\otimes dy$ one can compute $h_{s} = h_{(\bar x, \bar y)}$ explicitly as in \eqref{eq:h}.
%\begin{align*}
%  h_{(\bar x, \bar y)}(\lambda)  =   \sup_{0<\Sigma < M} \int_{X \times Y} & \|x - g(y)\|_2^2 - \lambda c((\bar x,\bar y), (x,y)) \\
%  & \qquad - \lambda \delta \log \left[N(Hx, \Sigma)\right]\, N(Hx, \Sigma)(y) dy d\mu_X^*(x).
%\end{align*}
Note that in order to preserve the constraint $0<\Sigma < M$ in the supremum above, we factorize $\Sigma$ using its Cholesky decomposition.

We test the algorithm for a $2\times 2$ matrix given as 
\begin{align}
    H = \left[\begin{array}{cc}
     5 \ & \ 1 \\
1 \ & \ 2  
    \end{array}
    \right].
\end{align}
We consider $(x_i)_{i=1}^{600}$ sampled uniformly in the square $[0,1]^2$ and we generate $y_i$ as $Hx_i$. The cost in $W_1^\delta$ is simply the Euclidean distance, while the parameters are set as $\delta = 0.1$ and $\varepsilon = 0.001$. The optimal $g_{opt}$ and $\Sigma_{opt}$ are
\begin{align}
    g_{opt} = \left[\begin{array}{cc}
     0.195 \ & \ -0.0607 \\
 -0.0309 \ & \ 0.380
    \end{array}
    \right],
\quad \Sigma_{opt} = \left[\begin{array}{cc}
     0.111 \ & \ -0.000107 \\
 -0.000107 \ & \ 0.167
    \end{array}
    \right]
\end{align}
and the convergence graph is reported in Figure \ref{eq:differentalpha}.
It is worth to notice that the optimal perturbation in $K$, that is a multivariate Gaussian with covariance matrix $\Sigma_{opt}$ has anisotropy in the direction of the eigenvector of the maximal eigenvalues of $H$. This is reported in Figure \ref{covariance} and it confirms that the admissible perturbation is capturing the deformation induced by the operator $H$.

\begin{figure*}[h!]
	\centering
\begin{subfigure}
{.49\textwidth}
\includegraphics[height=1.9in]{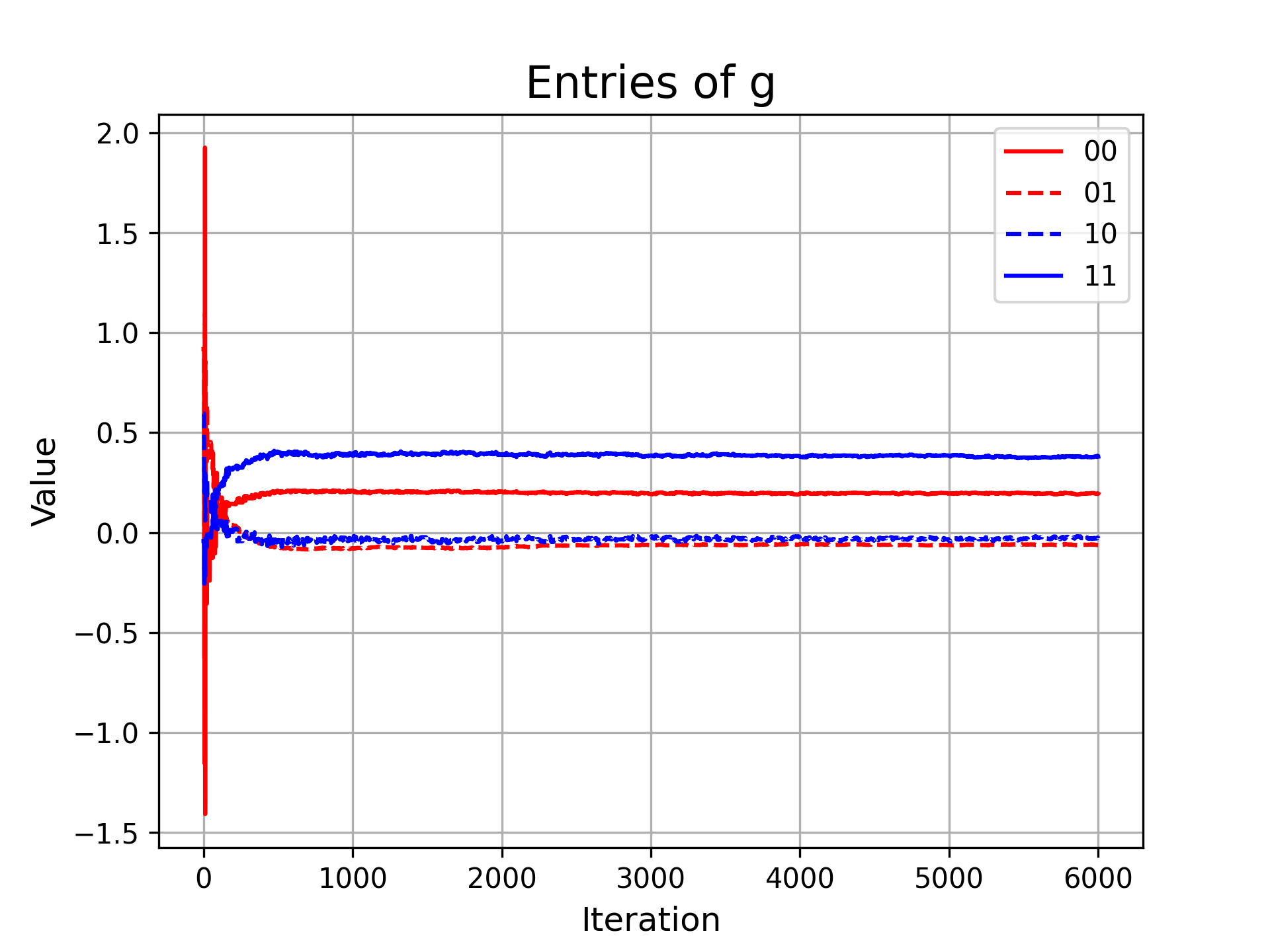}
\end{subfigure}
\begin{subfigure}{.49\textwidth}
\includegraphics[height=1.9in]{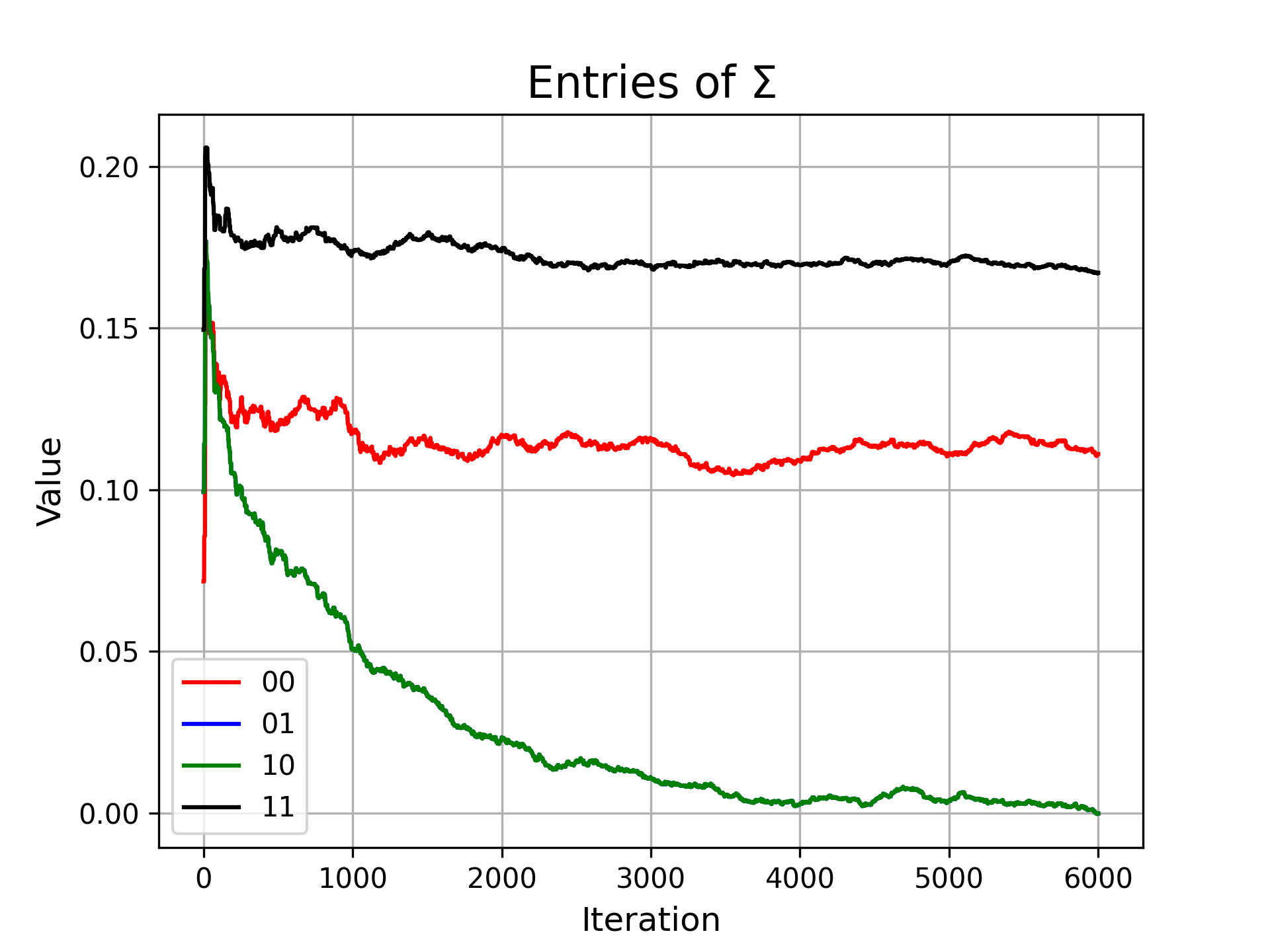}
	\end{subfigure}
	\caption{Left: entries of the matrix $g$ through the iterations. Right: entries of the covariance matrix $\Sigma$ through the iterations. Note that the lines for "01" and "10" in $\Sigma$ overlap.} \label{eq:differentalpha}
 \end{figure*}

\begin{figure*}[h!]
	\centering
\includegraphics[height=1.9in]{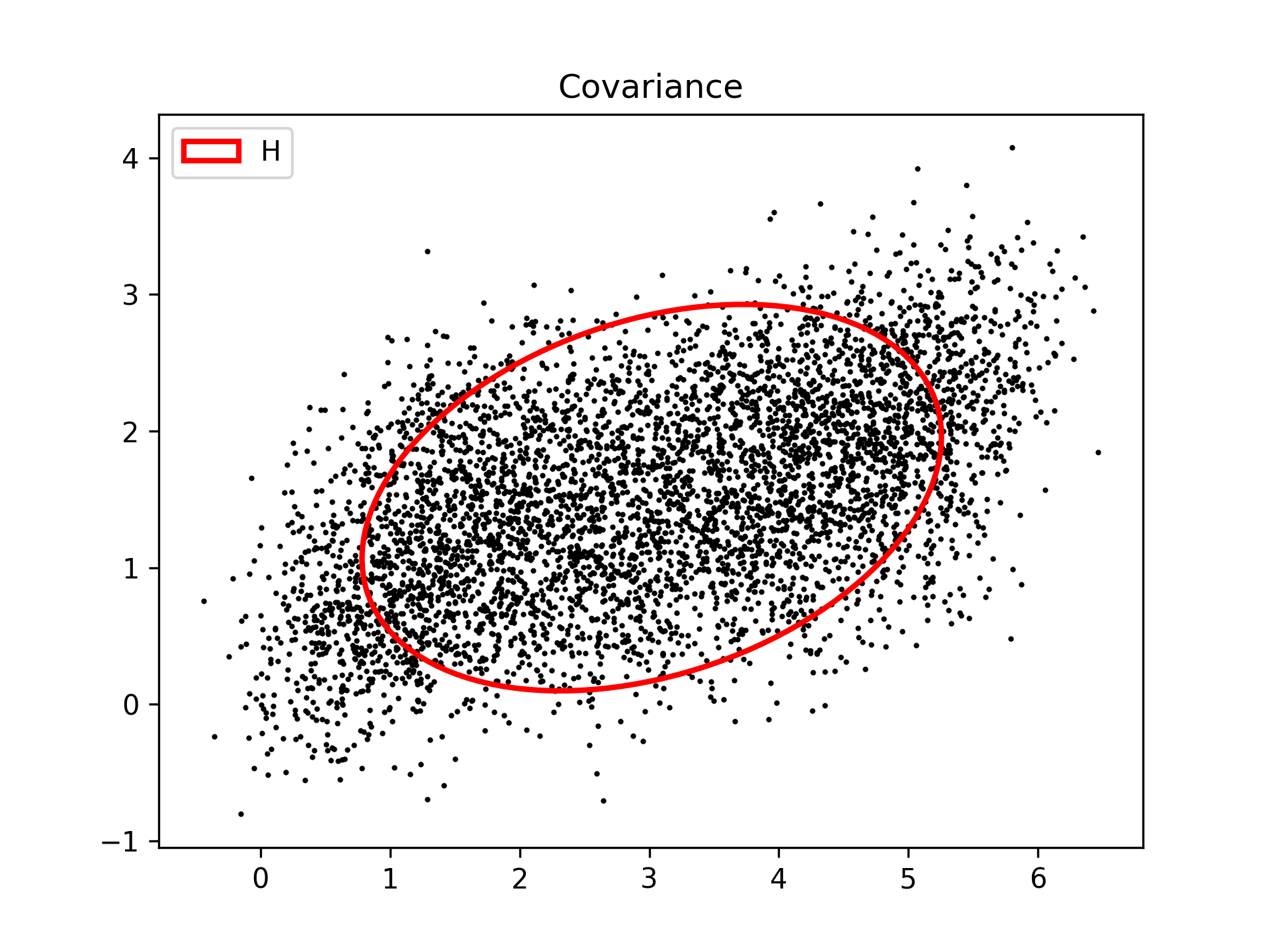}
\caption{Samples from a Gaussian with average $Hx$ and optimal covariance $\Sigma_{opt}$ together with the ellipse representing the of eigenvectors of $H$.}\label{covariance}
 \end{figure*}

\subsection{Image deconvolution}
Finally we apply the framework outlined in Section \ref{sec:gaussian_iso} (isotropic Gaussian perturbation) for image deconvolution. In particular, here $H : \R^{28 \times 28} \rightarrow \R^{26 \times 26}$  is chosen to be a convolution operator with a $3 \times 3$ Gaussian filter. Given samples $(x_i,y_i)_{i=1}^{150}$ where $y_i$ are constructed as $Hx_i$,  we construct a robust version of a deconvolution operator. We apply the optimal deconvolution operator to reconstruct 1) blurred MNIST images that are additionally corrupted by Gaussian noise of standard deviation $\sigma \in \{0.01, 0.05, 0.1\}$ and 2) blurred MNIST images corrupted by Poisson noise where the image $y_i$ is generated as $y_i \sim \text{Pois}(Hx/\sigma)$ with $\sigma \in \{0.01, 0.05, 0.1\}$ and $Hx$ representing the blurred MNIST image. Parameters are set to $\delta = 0.1$ and $\epsilon = 0.001$. The result is reported in Figure \ref{fig:dec}, compared to a reconstruction using an inverse with generalized Tikhonov regularization with the Laplacian operator and regularization parameter $\lambda$. The optimal regularization parameter $\lambda$ is determined via bisection search, minimizing the MSE computed over the entire MNIST dataset (rather than individual images). Error metrics based on $10000$ MNIST images are reported in Table \ref{tab:metrics}. 

 \begin{table}[h!]
\centering
\caption{Comparison of PADRO and generalized Tikhonov deconvolutions with different noise levels ($\sigma$) based on MSE and SSIM metrics over 10000 MNIST images.}
\label{tab:metrics}

\subcaption{Gaussian noise} 

\small 
\begin{minipage}{0.31\textwidth}
\centering
\begin{tabular}{|c|c|c|}
\hline
$\sigma = 0.01$ & \textbf{MSE} & \textbf{SSIM} \\ \hline
\textbf{PADRO}  &    0.0132           &    0.821           \\ \hline
\textbf{Tikhonov} &    0.0137          &      0.754         \\ \hline
\end{tabular}
\end{minipage}
 \ \ \ 
\begin{minipage}{0.31\textwidth}
\centering
\begin{tabular}{|c|c|c|}
\hline
$\sigma = 0.05$ & \textbf{MSE} & \textbf{SSIM} \\ \hline
\textbf{PADRO}  &       0.0195       &  0.707             \\ \hline
\textbf{Tikhonov} &       0.0230       &       0.682        \\ \hline
\end{tabular}
\end{minipage}
 \ \ \ 
\begin{minipage}{0.32\textwidth}
\centering
\begin{tabular}{|c|c|c|}
\hline
$\sigma = 0.1$  & \textbf{MSE} & \textbf{SSIM} \\ \hline
\textbf{PADRO}  &      0.0307        &   0.621            \\ \hline
\textbf{Tikhonov} &     0.0340         &      0.608         \\ \hline
\end{tabular}
\end{minipage}

\medskip

\subcaption{Poisson noise} 

\begin{minipage}{0.31\textwidth}
\centering
\begin{tabular}{|c|c|c|}
\hline
$\sigma = 0.01$ & \textbf{MSE} & \textbf{SSIM} \\ \hline
\textbf{PADRO}  &     0.0160         &      0.846         \\ \hline
\textbf{Tikhonov} &    0.0176          &      0.725         \\ \hline
\end{tabular}
\end{minipage}
\ \ \ 
\begin{minipage}{0.31\textwidth}
\centering
\begin{tabular}{|c|c|c|}
\hline
$\sigma = 0.05$ & \textbf{MSE} & \textbf{SSIM} \\ \hline
\textbf{PADRO}  &      0.0222        &   0.779            \\ \hline
\textbf{Tikhonov} &      0.0241        &      0.668         \\ \hline
\end{tabular}
\end{minipage}
\ \ \ 
\begin{minipage}{0.32\textwidth}
\centering
\begin{tabular}{|c|c|c|}
\hline
$\sigma = 0.1$  & \textbf{MSE} & \textbf{SSIM} \\ \hline
\textbf{PADRO}  &      0.0254        &   0.742            \\ \hline
\textbf{Tikhonov} &      0.0289        &     0.619           \\ \hline
\end{tabular}
\end{minipage}

\end{table}

\normalsize

 \begin{figure*}[h!]
	\centering
\begin{subfigure}{0.49\textwidth}
\centering
\includegraphics[width=\textwidth]{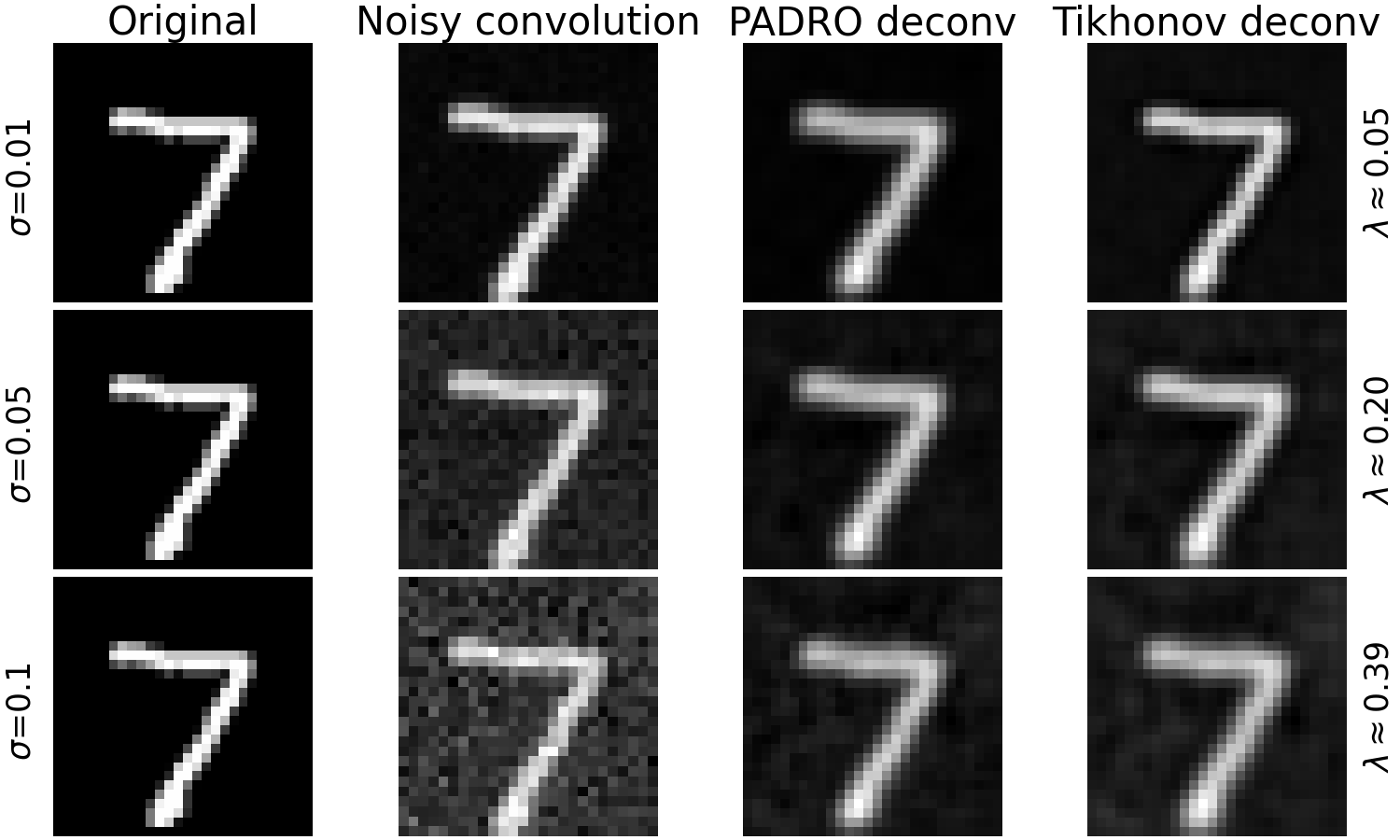}
\caption{Gaussian noise}
\end{subfigure}
% \vfill
\begin{subfigure}{0.49\textwidth}
\centering
\includegraphics[width=\textwidth]{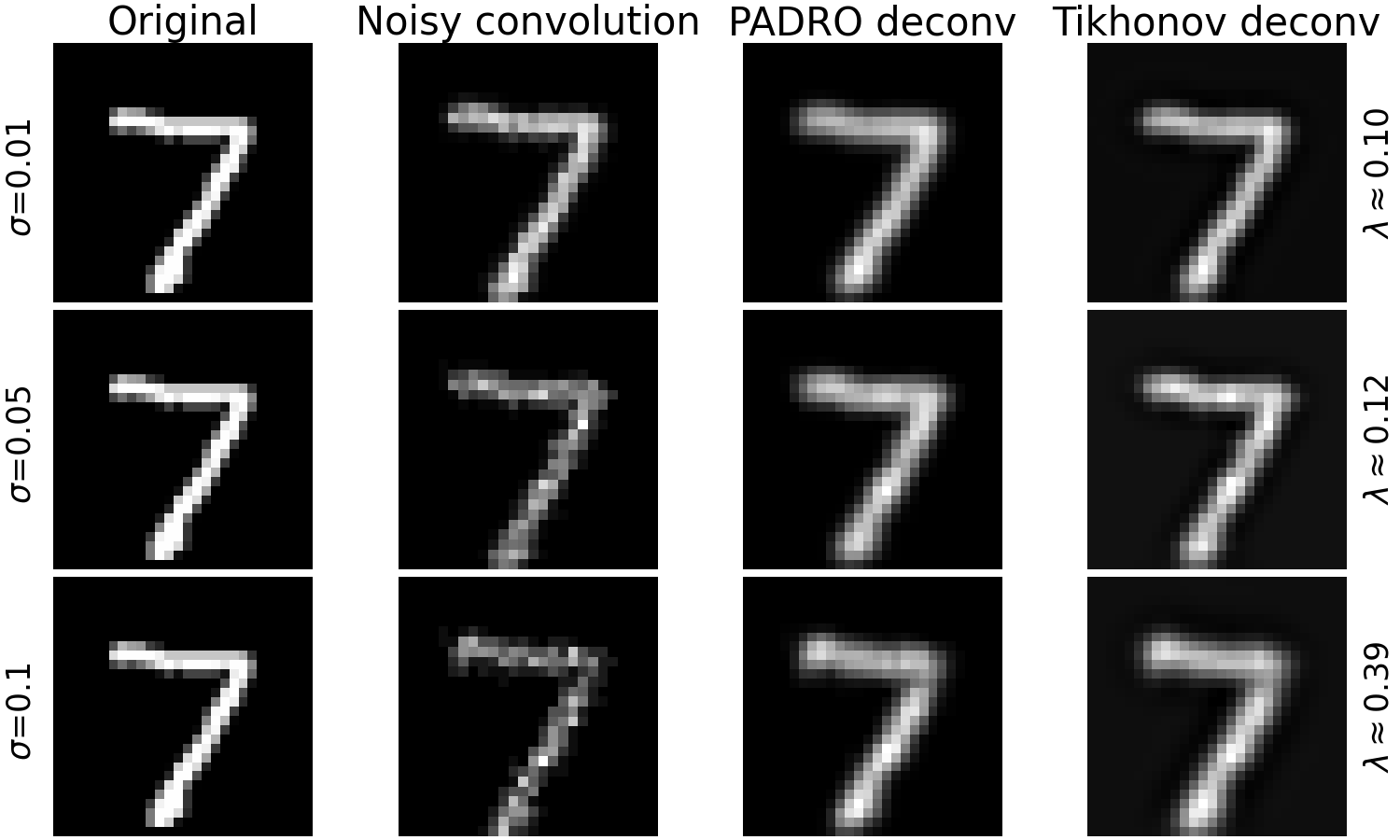}
\caption{Poisson noise}
	\end{subfigure}
	\caption{Deconvolution result for a given MNIST image with different noise types and levels, compared to a Tikhonov regularized inverse.} 
    \label{fig:dec}
 \end{figure*}

Our framework results in a blurred reconstruction of the original MNIST image, this is an effect of the low-complexity inverse we consider. This disadvantage is compensated for by the fact that our framework can handle different types and levels of noise with the same optimal deconvolution operator. It is also seen that our framework has very similar results to the Tikhonov regularized inverse. However, the advantage of our framework is that we learn the inverse, which can now be applied to any MNIST image corrupted by any type of noise, while model-based regularization learns the best reconstruction for each input individually. Our framework slightly outperforms the Tikhonov regularized inverse in all error metrics reported in Table \ref{tab:metrics}.

\section{Conclusion}
We have introduced the novel \textit{perturbation-aware} DRO framework which can be used to solve inverse problems and prescribe different types of robustness. This results in a flexible inverse that can be applied to measurements corrupted by any type of noise. Moreover, we proved a weak duality result, allowing us to demonstrate the method numerically for matrix inversion and deconvolution problems with promising results.

%\input{Sections/dual_v2}

%
% ---- Bibliography ----
%
% BibTeX users should specify bibliography style 'splncs04'.
% References will then be sorted and formatted in the correct style.
%
\bibliographystyle{splncs04.bst}
\bibliography{mybibliography}
\end{document}